\newtheorem{theorem}{Theorem}[section]
\newtheorem{lemma}[theorem]{Lemma}
\newtheorem{corollary}[theorem]{Corollary}
\theoremstyle{definition}
\newtheorem{definition}[theorem]{Definition}
\theoremstyle{remark}
\newtheorem{remark}[theorem]{Remark}
\numberwithin{equation}{section}
\def\id{\mathop{\rm id}}
\def\id{{\bf 1}\!\!{\rm I}}
\def\bn{{\mathbb N}}
\def\br{{\mathbb R}}
\def\l{\lambda}
\def\i{\varepsilon}
\def\N{\mathbb{N}}
\def\a{\alpha}
\def\b{\beta}
\def\d{\delta}
\def\cf{{\mathcal F}}
\def\cf{\mathcal{ F}}
\def\ck{\mathcal{K}}
\def\gu{{\frak U}}
\begin{document}
\setcounter{page}{1}

\title[Uniform ergodicity and perturbation bounds]{Uniform ergodicities and perturbation bounds of Markov chains on ordered Banach spaces}

\author[Nazife Erkur\c{s}un \"Ozcan, Farrukh Mukhamedov]{Nazife Erkur\c{s}un \"Ozcan$^1$ and Farrukh Mukhamedov$^2$$^*$}

\address{$^{1}$ Department of Mathematics, Faculty of Science, Hacettepe University, Ankara, 06800,Turkey.}
\email{{erkursun.ozcan@hacettepe.edu.tr}}

\address{$^{2}$ Department of Computational and Theoretical Science, Faculty of Science, International Islamic University Malaysia, Kuantan, Pahang, 25710,
Malaysia.} \email{{far75m@yandex.ru; farrukh\_m@iium.edu.my}}


\subjclass[2010]{Primary 47A35; Secondary  60J10, 28D05.}

\keywords{uniformly asymptotically stable, uniformly mean ergodic,
Markov operator, norm ordered space, Dobrushin's coefficient,
perturbation bound}

\date{Received: xxxxxx; Revised: yyyyyy; Accepted: zzzzzz.
\newline \indent $^{*}$ Corresponding author}

\begin{abstract}
It is known that Dobrushin's ergodicity coefficient is one of the
effective tools in the investigations of limiting behavior of
Markov processes.  Several interesting properties of the
ergodicity coefficient of a positive mapping defined on ordered
Banach space with a base have been studied. In this paper, we
consider uniformly mean ergodic and asymptotically stable Markov
operators on ordered Banach spaces. In terms of the ergodicity
coefficient, we prove uniform mean ergodicity criterion in terms
of the ergodicity coefficient. Moreover, we develop the
perturbation theory for uniformly asymptotically stable Markov
chains on ordered Banach spaces. In particularly, main results
open new perspectives in the perturbation theory for quantum
Markov processes defined on von Neumann algebras. Moreover, by
varying the Banach spaces one can obtain several interesting
results in both classical and quantum settings as well.
\end{abstract} \maketitle

\section{Introduction}

It is well-known that the transition probabilities $P(x,A)$
(defined on a measurable space $(E,\cf)$) of Markov processes
naturally define a linear operator by $ Tf(x)=\int f(y)P(x,dy)$,
which is called {\it Markov operator} and acts on $L^1$-spaces.
The study of the entire process can be reduced to the study of the
limit behavior of the corresponding Markov operator (see
\cite{K}). When we look at quantum analogous of Markov processes,
which naturally appear in various directions of quantum physics
such as quantum statistical physics and quantum optics etc. In
these studies it is important to elaborate with associated quantum
dynamical systems (time evolutions of the system) \cite{RKW},
which eventually converge to a set of stationary states. From the
mathematical point of view, ergodic properties of quantum  Markov
operators were investigated by many authors. We refer a reader to
\cite{AH,FR1,NSZ} for further details relative to some differences
between the classical and the quantum situations.

In \cite{SZ} it was proposed to investigate ergodic properties of
Markov operator on abstract framework, i.e. on ordered Banach
spaces. Since the study of several properties of physical and
probabilistic processes in abstract framework is convenient and
important (see \cite{Alf}). Some applications of this scheme in
quantum information have been discussed in \cite{RKW}. We
emphasize that the classical and quantum cases confine to this
scheme. We point out that in this abstract scheme one considers an
ordered normed spaces and mappings of these spaces (see
\cite{Alf}). Moreover, in this setting mostly, certain ergodic
properties of Markov operators s were considered and investigated
in \cite{B,EW1,RKW}. Nevertheless, the question about the
sensitivity of stationary states and perturbations of the Markov
chain are not explored well. Very recently in \cite{SW},
perturbation bounds have been found for a quantum Markov chains
acting on finite dimensional algebras.

On the other hand, it is known \cite{Kar,Mit} that Dobrushin's
ergodicity coefficient is one of the effective tools in the
investigations of limiting behavior of Markov processes (see
\cite{IS,Se2} for review). In \cite{M0,M01} we have defined such
an ergodicity coefficient $\d(T)$ of a positive mapping $T$
defined on ordered Banach space with a base, and studied its
properties. In this paper, we consider uniformly mean ergodic and
uniformly asymptotical stable Markov operators on ordered Banach
spaces. In terms of the ergodicity coefficient, we prove the
equivalence of uniform and weak mean ergodicities of Markov
operators. This result allowed us to establish a category theorem
for uniformly mean ergodic Markov operators. Furthermore,
following some ideas of \cite{Kar,Mit} and using properties of
$\d(T)$, we develop the perturbation theory for uniformly
asymptotical stable Markov chains in the abstract scheme. Our
results open new perspectives in the perturbation theory for
quantum Markov processes in more general von Neumann algebras
setting, which have significant applications in quantum theory
\cite{RKW}.

\section{Preliminaries}

In this section we recall some necessary definitions and fact
about ordered Banach spaces.

 Let $X$ be an ordered vector space
with a cone $X_+=\{x\in X: \ x\geq 0\}$. A subset $\ck$ is called
a {\it base} for $X$, if one has $\ck=\{x\in X_+:\ f(x)=1\}$ for
some strictly positive (i.e. $f(x)>0$ for $x>0$) linear functional
$f$ on $X$. An ordered vector space $X$ with generating cone $X_+$
(i.e. $X=X_+-X_+$) and a fixed base $\ck$, defined by a functional
$f$, is called {\it an ordered vector space with a base}
\cite{Alf}. In what follows, we denote it as $(X,X_+,\ck,f)$. Let
$U$ be the convex hull of the set $\ck\cup(-\ck)$, and let
$$
\|x\|_{\ck}=\inf\{\l\in\br_+:\ x\in\l U\}.
$$
Then one can see that $\|\cdot\|_{\ck}$ is a seminorm on $X$.
Moreover, one has $\ck=\{x\in X_+: \ \|x\|_{\ck}=1\}$,
$f(x)=\|x\|_{\ck}$ for $x\in X_+$. If the set $U$ is linearly
bounded (i.e. for any line $\ell$ the intersection $\ell\cap U$ is
a bounded set), then $\|\cdot\|_{\ck}$ is a norm, and in this case
$(X,X_+,\ck,f)$ is called {\it an ordered normed space with a
base}. When  $X$ is complete with respect to the norm
$\|\cdot\|_{\ck}$ and the cone $X_+$ is closed, then
$(X,X_+,\ck,f)$ is called {\it an ordered Banach space with a base
(OBSB)}. In the sequel, for the sake of simplicity instead of
$\|\cdot\|_{\ck}$ we will use usual notation $\|\cdot\|$.

Let us provide some examples of OBSB.

\begin{itemize}
\item[1.] Let $M$ be a von Neumann algebra. Let $M_{h,*}$ be the
Hermitian part of the predual space $M_*$ of $M$. As a base $\ck$
we define the set of normal states of $M$. Then
$(M_{h,*},M_{*,+},\ck,\id)$ is a OBSB, where $M_{*,+}$ is the set
of all positive functionals taken from $M_*$, and $\id$ is the
unit in $M$.

\item[2.] Let $X=\ell_p$, $1<p<\infty$. Define
$$
X_+=\bigg\{\mathbf{x}=(x_0,x_1,\dots,x_n,\dots)\in\ell_p: \
x_0\geq \bigg(\sum_{i=1}^\infty|x_i|^p\bigg)^{1/p}\bigg\}
$$
and $f_0(\mathbf{x})=x_0$. Then $f_0$ is a strictly positive
linear functional. In this case, we define $\ck=\{x\in X_+: \
f_0(\mathbf{x})=1\}$. Then one can see that $(X,X_+,\ck,f_0)$ is a
OBSB. Note that the norm $\|\cdot\|_{\ck}$ is equivalent to the
usual $\ell_p$-norm.
\end{itemize}

Let $(X,X_+,\ck,f)$ be an OBSB. It is well-known (see
\cite[Proposition II.1.14]{Alf}) that every element $x$ of OBSB
admits a decomposition $x=y-z$, where $y,z\geq 0$ and
$\|x\|=\|y\|+\|z\|$. From this decomposition, we obtain the
following fact.

\begin{lemma}\label{3.2}\cite{M0} For every $x,y\in X$ such that $x-y\in N$ there
exist $u,v\in \ck$ with
$$
x-y=\frac{\|x-y\|}{2}(u-v).
$$
\end{lemma}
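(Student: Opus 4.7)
The plan is to reduce everything to the Jordan-type decomposition cited just before the lemma. Set $z=x-y$. By hypothesis $z\in N$, and in the context of $(X,X_+,\ck,f)$ the subspace $N$ is the kernel of $f$, so $f(z)=0$.

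First I would invoke the decomposition from \cite[Proposition II.1.14]{Alf} stated in the preceding paragraph: there exist $a,b\in X_+$ with $z=a-b$ and $\|z\|=\|a\|+\|b\|$. Applying $f$ to $z=a-b$ and using $f(z)=0$ together with the identity $f(w)=\|w\|$ for $w\in X_+$ gives $\|a\|=\|b\|$. Combined with $\|z\|=\|a\|+\|b\|$, this forces
$$\|a\|=\|b\|=\frac{\|z\|}{2}.$$

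Now I would split into two cases. If $z\neq 0$, then $\|a\|,\|b\|>0$, so setting $u=a/\|a\|$ and $v=b/\|b\|$ one has $u,v\in X_+$ with $f(u)=f(v)=1$, that is $u,v\in\ck$; and
$$x-y=z=a-b=\|a\|u-\|b\|v=\frac{\|x-y\|}{2}(u-v),$$
which is the claimed identity. If $z=0$, then any $u=v\in\ck$ (note $\ck$ is nonempty by definition) satisfies $0=\frac{0}{2}(u-v)$ trivially.

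There is essentially no obstacle here: the lemma is a direct bookkeeping consequence of the decomposition property of OBSB combined with the fact that $f$ agrees with the norm on the positive cone. The only minor care needed is the vanishing case $z=0$, which is handled by the convention that $\ck$ is nonempty, so that a normalization of $a$ and $b$ is not required when both are zero.
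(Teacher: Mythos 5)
Your proof is correct and follows exactly the route the paper intends: the paper gives no explicit argument but states that the lemma "from this decomposition" (the Jordan-type decomposition of \cite[Proposition II.1.14]{Alf}), and your fleshing-out — applying $f$ to force $\|a\|=\|b\|=\|x-y\|/2$ and then normalizing into $\ck$ — is precisely that. The separate treatment of the degenerate case $x=y$ is a reasonable extra precaution.
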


Let $(X,X_+,\ck,f)$ be an OBSB. A linear operator $T:X\to X$ is
called positive, if $Tx\geq 0$ whenever $x\geq 0$. A positive
linear operator $T:X\to X$ is called {\it Markov}, if
$T(\ck)\subset\ck$. It is clear that $\|T\|=1$, and its adjoint
mapping $T^*: X^*\to X^*$ acts in ordered Banach space $X^*$ with
unit $f$, and moreover, one has $T^*f=f$. Note that in case of
$X=\br^n$, $X_+=\br_+^n$ and $\ck=\{(x_i)\in\br^n: \ x_i\geq 0, \
\sum_{i=1}^n x_i=1\}$, then for any Markov operator $T$ acting on
$\br^n$, the conjugate operator $T^*$ can be identified with a
usual stochastic matrix. Now for each $y\in X$ we define a linear
operator $T_y: X\to X$ by $T_y(x)=f(x)y$. For a given operator $T$
we denote
$$
A_n(T)=\frac{1}{n}\sum_{k=0}^{n-1}T^{k}, \ \ n\in\bn.
$$

\begin{definition} A Markov operator $T: X\to X$ is called
\begin{enumerate}
\item[(i)] {\it uniformly asymptotically stable} if there exist an
element $y_0\in\ck$ such that
$$
\lim_{n\to\infty}\|T^{n}-T_{y_0}\|=0;
$$
\item[(ii)] {\it uniformly mean ergodic} if there exist an element
$y_0\in \ck$ such that
$$
\lim_{n\to\infty} \bigg\|A_n(T)-T_{y_0}\bigg\|=0;
$$
\item[(iii)] \textit{weakly ergodic} if one has
$$
\lim_{n\to\infty}\sup_{x,y\in \ck}\|T^{n}x-T^{n}y\|=0;
$$
\item[(iv)] {\it weakly mean ergodic} if one has
$$
\lim_{n\to\infty} \sup_{x,y\in \ck} \|A_n(T)x - A_n(T)y\|=0.
$$
\end{enumerate}
\end{definition}

\begin{remark}\label{dd0} We notice that uniform asymptotical
stability implies uniform mean ergodicity. Moreover, if $T$ is
uniform mean ergodic, then $y_0$, corresponding to $T_{y_0}$, is a
fixed point of $T$. Indeed, taking limit in the equality
$$
\bigg(1+\frac{1}{n}\bigg)A_{n+1}(T)-\frac{1}{n}I=TA_{n}(T)
$$
we find $TT_{y_0}=T_{y_0}$, which yields $Ty_0=y_0$. We stress
that every uniformly mean ergodic Markov operator has a unique
fixed point.
\end{remark}

Let $(X,X_+,\ck,f)$ be an OBSB  and $T:X\to X$ be a linear bounded
operator. Letting
\begin{equation}
\label{NN} N=\{x\in X: \ f(x)=0\},
\end{equation}
we define
\begin{equation}
\label{db} \d(T)=\sup_{x\in N,\ x\neq 0}\frac{\|Tx\|}{\|x\|}.
\end{equation}

The quantity $\d(T)$ is called the \textit{Dobrushin's ergodicity
coefficient} of $T$ (see \cite{M0}).

\begin{remark} We note that if $X^*$ is a commutative algebra, the notion of
the Dobrushin's ergodicity coefficient was studied in
\cite{C},\cite{D} (see \cite{IS,Se2} for review). In a
non-commutative setting, i.e. when $X^*$ is a von Neumann algebra,
such a notion was introduced in \cite{M}. We should stress that
such a coefficient has been independently defined in \cite{GQ}.
Furthermore, for particular cases, i.e. in a non-commutative
setting, the coefficient explicitly has been calculated for
quantum channels (i.e. completely positive maps).
\end{remark}

The next result establishes several properties of the Dobrushin's
ergodicity coefficient.

\begin{theorem}\cite{M0}\label{Dob}
Let $(X, X_+ , \ck ,f)$ be an OBSB and $T,S : X\to X$ be Markov
operators. The following assertions hold:
\begin{itemize}
\item[(i)] $0 \leq \d (T) \leq 1$; \item[(ii)] $|\d (T) - \d (S)|
\leq \d (T-S) \leq \left\| T - S\right\|$; \item[(iii)] $\d (TS)
\leq \d (T) \d (S)$; \item[(iv)] if $H: X\to X$ is a linear
bounded operator such that $H^* (f) =0$, then $\left\| TH \right\|
\leq \d (T) \left\| H \right\|$; \item[(v)] one has
$$
\displaystyle \d (T) = \frac{1}{2} \sup_{u,v \in \ck} \left\| Tu -
Tv\right\|;
$$
\item[(vi)] if $\d (T)=0$, then there exists $y_0 \in X_+$ such
that $T=T_{y_0}$.
\end{itemize}
\end{theorem}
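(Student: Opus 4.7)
The plan is to treat each of the six assertions in turn. All of them flow from two basic observations: first, $N=\ker f$ is invariant under any Markov operator (since $T^*f=f$), and second, Lemma \ref{3.2} lets us write any element of $N$ as a scalar multiple of a difference $u-v$ with $u,v\in\ck$. No heavy machinery is required beyond these two facts and the definitions \eqref{NN}--\eqref{db}.

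Items (i)--(iv) are essentially direct consequences of the definition. For (i), non-negativity is trivial and $\delta(T)\leq 1$ follows from $\|T\|=1$. For (ii), I would observe that both $T-S$ and $S$ fix $N$ set-wise, since $(T-S)^*f=0$ and $S^*f=f$; the triangle inequality $\|Tx\|\leq\|Sx\|+\|(T-S)x\|$ for $x\in N\setminus\{0\}$, after dividing by $\|x\|$ and taking the supremum, yields $\delta(T)\leq\delta(S)+\delta(T-S)$, and symmetry closes the first inequality, while $\delta(T-S)\leq\|T-S\|$ is immediate from the definition. For (iii), $Sx\in N$ whenever $x\in N$, so writing $\|TSx\|/\|x\|=(\|TSx\|/\|Sx\|)(\|Sx\|/\|x\|)$ produces the estimate (with the degenerate case $Sx=0$ handled trivially). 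For (iv), $H^*f=0$ forces $Hx\in N$ for every $x\in X$, hence $\|THx\|\leq\delta(T)\|Hx\|\leq\delta(T)\|H\|\|x\|$, and supremizing over $\|x\|\leq 1$ gives the bound.

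The main step is (v), and this is the only place where a genuine argument is needed. I would invoke Lemma \ref{3.2} applied to a non-zero $x\in N$ with $y=0$: this produces $u,v\in\ck$ with $x=\tfrac{\|x\|}{2}(u-v)$, so $\|Tx\|/\|x\|=\tfrac12\|Tu-Tv\|\leq\tfrac12\sup_{u,v\in\ck}\|Tu-Tv\|$, and supremizing over $x\in N\setminus\{0\}$ gives $\delta(T)\leq\tfrac12\sup_{u,v\in\ck}\|Tu-Tv\|$. The reverse inequality is immediate because $u-v\in N$ and $\|u-v\|\leq\|u\|+\|v\|=2$ for $u,v\in\ck$, which gives $\tfrac12\|Tu-Tv\|\leq\tfrac12\delta(T)\|u-v\|\leq\delta(T)$. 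This is the technical heart of the theorem; everything else reduces to $N$-invariance and the Markov normalization.

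Finally for (vi), I would fix any $x_0\in\ck$ and set $y_0:=Tx_0\in\ck$. For arbitrary $x\in X$ the decomposition $x=f(x)x_0+(x-f(x)x_0)$ places the second summand in $N$, since $f(x-f(x)x_0)=f(x)-f(x)f(x_0)=0$; as $\delta(T)=0$ forces $T$ to annihilate $N$, we obtain $Tx=f(x)Tx_0=f(x)y_0=T_{y_0}(x)$, i.e.\ $T=T_{y_0}$. The only non-routine point in the whole proof is the application of Lemma \ref{3.2} in (v); all other items are short manipulations with the definitions and the identity $T^*f=f$.
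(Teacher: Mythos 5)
Your proof is correct, and it follows exactly the route the paper relies on: the theorem is quoted from \cite{M0} without proof, but the surrounding setup (the definition \eqref{db} over $N=\ker f$, the identity $T^*f=f$, and Lemma \ref{3.2}) is precisely the machinery you use, with Lemma \ref{3.2} doing the real work in item (v) and everything else reducing to $N$-invariance and the triangle inequality. All six items check out, including the small degenerate cases ($Sx=0$ in (iii), $u=v$ in (v)).
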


\begin{remark}
Note that taking into account Theorem \ref{Dob}(v) we obtain that
the weak ergodicity (resp. weak mean ergodicity) is equivalent to
the condition $\d(T^{n})\to 0$ (resp. $\d(A_n(T))\to 0$) as
$n\to\infty$.
\end{remark}

The following theorem gives us the conditions that are equivalent
to the uniform asymptotical stability.

\begin{theorem}\cite{M0}\label{Dob1}
Let $(X, X_+ , \ck ,f)$ be an OBSB and $T : X\to X$ be a Markov
operator. The following assertions are equivalent:
\begin{itemize}
\item[(i)] $T$ is weakly ergodic; \item[(ii)] there exists $\rho
\in [0,1)$ and $n_0 \in\bn$ such that  $ \d (T^{n_0}) \leq \rho$;
\item[(iii)] $T$ is uniformly asymptotically stable.
 Moreover,
there are positive constants $C, \a, n_0 \in\bn$ and $x_0 \in\ck$
such that
$$
\left\| T^n - T_{x_0} \right\| \leq C e^{-\a n}, \,\,\,\,\,
\forall n\geq n_0.
$$
\end{itemize}
\end{theorem}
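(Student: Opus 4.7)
The plan is to establish the cyclic chain (i)$\Rightarrow$(ii)$\Rightarrow$(iii)$\Rightarrow$(i), with the quantitative ``moreover'' clause emerging directly from the (ii)$\Rightarrow$(iii) step. The implication (i)$\Rightarrow$(ii) is immediate from the remark identifying weak ergodicity with $\d(T^n)\to 0$: just pick $n_0$ large enough that $\rho:=\d(T^{n_0})<1$. The implication (iii)$\Rightarrow$(i) is also short: for $u,v\in\ck$ one has $T_{x_0}(u-v)=f(u-v)x_0=0$, so $T^nu-T^nv=(T^n-T_{x_0})(u-v)$, and the right-hand side tends to $0$ uniformly on $\ck\times\ck$ because $\|u-v\|$ is bounded there.

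The heart of the argument is (ii)$\Rightarrow$(iii), which I would carry out in three moves. First, submultiplicativity of $\d$ (Theorem \ref{Dob}(iii)) gives $\d(T^{kn_0})\leq\rho^k$, and writing any $n=kn_0+r$ with $0\leq r<n_0$ while using $\d(T^r)\leq 1$ yields
$$
\d(T^n)\leq \rho^{-1}e^{-\a n},\qquad \a:=-\frac{\log\rho}{n_0}>0,
$$
so the ergodicity coefficient already decays exponentially. Second, I would construct the candidate limit: for a fixed $x\in\ck$ set $u_k:=T^{n_0 k}x\in\ck$; then Theorem \ref{Dob}(v) together with the above bound gives
$$
\|u_m-u_{m+k}\|=\|T^{n_0 m}x-T^{n_0 m}u_k\|\leq 2\d(T^{n_0 m})\leq 2\rho^m,
$$
so $\{u_m\}$ is Cauchy. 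Its limit $x_0$ lies in $\ck$ because $X_+$ is closed and $f$ is norm-continuous (from the decomposition $x=y-z$, $\|x\|=\|y\|+\|z\|$, one gets $|f(x)|\leq\|x\|$), and the limit does not depend on the starting $x$ by the same inequality applied with two different base points; continuity of $T$ then forces $Tx_0=x_0$.

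Third, I would convert the exponential bound on $\d$ into one on the operator norm. Since $Tx_0=x_0$, we have $TT_{x_0}=T_{x_0}$, hence $T^n-T_{x_0}=T^n(I-T_{x_0})$. The operator $H:=I-T_{x_0}$ satisfies $H^*f=f-f(x_0)f=0$, so Theorem \ref{Dob}(iv) gives
$$
\|T^n-T_{x_0}\|\leq\d(T^n)\|I-T_{x_0}\|\leq 2\d(T^n)\leq 2\rho^{-1}e^{-\a n},
$$
which is the asserted exponential estimate, with $C=2/\rho$.

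The main obstacle I anticipate is the middle move: verifying that $\{T^{n_0 k}x\}$ is Cauchy, that its limit genuinely belongs to the base $\ck$ (which requires closedness of $\ck$, obtained from closedness of $X_+$ and norm-continuity of $f$), and that this limit is independent of the starting $x\in\ck$. Once $x_0\in\ck$ has been produced and shown to be a fixed point, Theorem \ref{Dob}(iv) reduces the operator-norm estimate to an essentially algebraic check, and submultiplicativity handles the rate.
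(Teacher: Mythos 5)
Your proof is correct. Note that the paper itself does not prove Theorem \ref{Dob1} --- it is imported from \cite{M0} as a quoted result --- so there is no in-text argument to compare against; your chain (i)$\Rightarrow$(ii)$\Rightarrow$(iii)$\Rightarrow$(i) is the standard one, and each step checks out: the exponential decay of $\d(T^n)$ from submultiplicativity, the Cauchy construction of the fixed point $x_0\in\ck$ (using that $|f(x)|\le\|x\|$ makes $\ck$ closed), and the reduction $T^n-T_{x_0}=T^n(I-T_{x_0})$ combined with Theorem \ref{Dob}(iv) to get $\|T^n-T_{x_0}\|\le 2\d(T^n)$. The only cosmetic issue is the degenerate case $\rho=0$, where $C=2/\rho$ is undefined; this is repaired by replacing $\rho$ with any $\rho'\in(0,1)$, since $\d(T^{n_0})\le\rho\le\rho'$ still holds.
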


\section{Uniform mean ergodicity}

In this section, we are going to establish an analogous of Theorem
\ref{Dob1} for uniformly mean ergodic Markov operators.

Let $(X,X_+,\ck,f)$ be an OBSB. By $\gu$ we denote the set of all
Markov operators from $X$ to $X$ which have an eigenvalue 1 and
the corresponding eigenvector $f$ belongs to $\ck$.

  \begin{theorem}\label{UME}
Let  $(X, X_+, \ck, f)$ be an OBSB and $T\in \gu$. Then the
following statements are equivalent:
\begin{itemize}
\item[(i)] $T$ is weakly mean ergodic;

\item[(ii)] There exist $\rho \in [0,1)$ and $n_0 \in\bn$ such
that $\d (A_{n_0}(T)) \leq \rho$;

\item[(iii)] $T$ is uniformly mean ergodic.
\end{itemize}
\end{theorem}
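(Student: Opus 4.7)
The plan is to establish the cycle (i) $\Rightarrow$ (ii) $\Rightarrow$ (iii) $\Rightarrow$ (i); only the middle implication is substantial. The implication (i) $\Rightarrow$ (ii) is immediate from the remark following Theorem \ref{Dob}, by which weak mean ergodicity is equivalent to $\d(A_n(T))\to 0$, so for any $\rho\in[0,1)$ some $n_0$ achieves $\d(A_{n_0}(T))\le\rho$. For (iii) $\Rightarrow$ (i), observe that for $x,y\in\ck$ one has $T_{y_0}(x-y)=f(x-y)y_0=0$, so $\|A_n(T)x-A_n(T)y\|=\|(A_n(T)-T_{y_0})(x-y)\|\le 2\|A_n(T)-T_{y_0}\|\to 0$, giving weak mean ergodicity after taking the supremum over $\ck\times\ck$.

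The heart of the argument is (ii) $\Rightarrow$ (iii). Set $B=A_{n_0}(T)$. The first step is the polynomial identity
$$
I-B=(I-T)Q,\qquad Q=\frac{1}{n_0}\sum_{i=0}^{n_0-2}(n_0-1-i)T^i,
$$
obtained by expanding $I-T^j=(I-T)(I+T+\cdots+T^{j-1})$ inside $n_0(I-B)=\sum_{j=0}^{n_0-1}(I-T^j)$ and swapping the order of summation. Since $T^*f=f$, the hyperplane $N$ is $T$-invariant, hence invariant under every polynomial in $T$, and $\|B|_N\|=\d(B)\le\rho<1$ by \eqref{db}. The Neumann series therefore produces $((I-B)|_N)^{-1}$ of norm at most $(1-\rho)^{-1}$; because $Q$ and $I-T$ commute (both are polynomials in $T$), the identity $(I-T)|_N\,Q|_N=Q|_N\,(I-T)|_N=(I-B)|_N$ forces $(I-T)|_N$ to be invertible, and the triangle-inequality bound $\|Q\|\le(n_0-1)/2$ yields $\|((I-T)|_N)^{-1}\|\le\frac{n_0-1}{2(1-\rho)}$.

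The standard mean ergodic identity $A_n(T)(I-T)=\frac{1}{n}(I-T^n)$ now gives $A_n(T)|_N=\frac{1}{n}(I-T^n)|_N\cdot((I-T)|_N)^{-1}$, whence
$$
\d(A_n(T))=\|A_n(T)|_N\|\le\frac{n_0-1}{n(1-\rho)}\longrightarrow 0.
$$
To upgrade this $\d$-decay to operator-norm convergence I invoke $T\in\gu$: pick $y_0\in\ck$ with $Ty_0=y_0$, so $TT_{y_0}=T_{y_0}T=T_{y_0}$ and consequently $A_n(T)T_{y_0}=T_{y_0}$. For any $x\in X$ the element $x-f(x)y_0$ lies in $N$ and satisfies $\|x-f(x)y_0\|\le 2\|x\|$ (using $\|f\|\le 1$, which follows from the base decomposition in the paper), so $A_n(T)x-T_{y_0}x=A_n(T)(x-f(x)y_0)$, yielding $\|A_n(T)-T_{y_0}\|\le 2\d(A_n(T))\to 0$, which is (iii).

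The chief obstacle is the invertibility step in (ii) $\Rightarrow$ (iii): the hypothesis controls only the single average $A_{n_0}(T)$, whereas uniform mean ergodicity requires norm decay of $A_n(T)$ for every $n$. The polynomial factorization $I-B=(I-T)Q$ is the bridge that transfers the contractivity $\|B|_N\|<1$ into invertibility of $(I-T)|_N$; once that is in hand, the mean ergodic identity forces the $1/n$ rate automatically, and the fixed point supplied by $T\in\gu$ converts $\d$-decay into full operator-norm decay.
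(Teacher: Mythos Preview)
Your proof is correct, but the route for (ii) $\Rightarrow$ (iii) differs substantially from the paper's. The paper never inverts $(I-T)|_N$; instead it argues softly via the double-averaging identity: since $\|A_n(T)(I-T^k)\|\le\frac{2k}{n}$, one has $\|A_n(T)(I-A_{n_0}(T))\|\to 0$ and hence $\d(A_n(T)(I-A_{n_0}(T)))\to 0$; combining Theorem~\ref{Dob}(ii),(iii) then gives
\[
\d\big(A_n(T)(I-A_{n_0}(T))\big)\ \ge\ \d(A_n(T))-\d(A_n(T))\d(A_{n_0}(T))\ \ge\ (1-\rho)\,\d(A_n(T)),
\]
which squeezes $\d(A_n(T))\to 0$. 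Your argument instead factors $I-A_{n_0}(T)=(I-T)Q$, uses $\d(A_{n_0}(T))<1$ and the Neumann series to invert $(I-A_{n_0}(T))|_N$, and then---because commuting factors of an invertible element are themselves invertible---obtains invertibility of $(I-T)|_N$; the mean ergodic identity $A_n(T)(I-T)=\frac{1}{n}(I-T^n)$ then delivers the explicit rate $\d(A_n(T))\le\frac{n_0-1}{n(1-\rho)}$. The final passage from $\d(A_n(T))\to 0$ to $\|A_n(T)-T_{y_0}\|\to 0$ via the fixed point $y_0\in\ck$ is the same in both proofs. What the paper's approach buys is minimality of tools: it uses only the four properties of $\d$ listed in Theorem~\ref{Dob} and no operator-algebraic invertibility reasoning. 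What your approach buys is a quantitative rate (and indeed the stronger conclusion that $(I-T)|_N$ has a bounded inverse, which is essentially a spectral-gap statement); this is sharper information than the paper extracts.
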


\begin{proof}

The implications $(i) \Rightarrow (ii)$ and $(iii) \Rightarrow
(i)$ are obvious. It is enough to prove the implication $(ii)
\Rightarrow (iii)$.

Let us assume that there exist $n_0\in\bn$ and $\rho \in [0,1)$
such that $\d (A_{n_0}(T))\leq\rho$.

Since $T$ is Markov operator on $X$ we have
$$
\left\| A_n(T) (I - T) \right\| \leq \frac{1}{n} (1 + \left\|
T\right\|)
$$
and  for each $k\in\bn$
$$
\left\| A_n(T) (I - T^k) \right\| \leq \frac{1}{n} (1 + \left\|
T\right\| + \cdots + \left\| T^{k-1} \right\|+ \left\|
T^{n-1}\right\| + \cdots + \left\| T^{k-n+1}\right\| ).
$$
Hence, both norms converges to zero as $n\to\infty$. Therefore,
for each $m\in\bn$ one gets
\begin{eqnarray*}
\lim_{n\to\infty} \left\| A_n(T) (I- A_m(T)) \right\|& =&
\lim_{n\to\infty} \left\| A_n(T) \bigg(\frac{1}{m}
\sum_{k=0}^{m-1}
(I-T^k)\bigg) \right\| \\[2mm]
&=& \lim_{n\to\infty} \left\| \frac{1}{m} \sum_{k=0}^{m-1} A_n(T)
(I-T^k)) \right\| =0
\end{eqnarray*}
which implies
\begin{equation}\label{dd1}
\lim_{n\to\infty} \d ( A_n(T)(I- A_m(T)) =0.
\end{equation}

From (ii) Theorem \ref{Dob} one finds
\begin{equation*}
|\d ( A_n(T) A_{n_0}(T)) - \d ( A_n(T))| \leq \d ( A_n(T)(I -
A_{n_0}(T)).
\end{equation*}

From this inequality with (iii) Theorem \ref{Dob} we infer that
\begin{eqnarray}\label{dd2}
 \d ( A_n(T)(I-A_{n_0}(T))&\geq&  \d (A_n(T))-\d ( A_n(T)
 A_{n_0}(T))\nonumber\\[2mm]
&\geq&  \d (A_n(T))-\d (A_n(T))\d(A_{n_0}(T))\nonumber\\[2mm]
&\geq&  (1-\rho)\d (A_n(T))
\end{eqnarray}

So, from \eqref{dd1} and \eqref{dd2} we obtain $\lim_{n\to\infty}
\d (A_n(T))=0$, i.e.
\begin{eqnarray}\label{dd3}
\lim_{n\to\infty}\sup_{x, y\in \ck} \left\|A_n(T) x - A_n(T)y
\right\|=0.
\end{eqnarray}

Due to $T\in\gu$ one can find a fixed point $y_0\in \ck$ of $T$,
which from \eqref{dd3} yields
\begin{eqnarray*}
\lim_{n\to\infty}\sup_{x\in \ck} \left\| A_n(T)x -y_0
\right\|&\leq& \lim_{n\to\infty}\sup_{x, y\in \ck} \left\|A_n(T)x
- A_n(T)y \right\|\\[2mm]
& =&  \lim_{n\to\infty}2\d (A_n(T))=0.
\end{eqnarray*}
which means the uniform mean ergodicity of $T$. This completes the
proof.
\end{proof}

By $\gu_{ume}$ we denote the set of all uniformly mean ergodic
Markov operators belonging to $\gu$.

\begin{theorem}\label{Norm-M} Let $(X,X_+,\ck,f)$ be an OBSB. Then the set $\gu_{ume}$ is
a norm dense and open subset of $\gu$.
\end{theorem}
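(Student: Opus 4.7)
The plan is to handle openness and density separately, each as a direct consequence of Theorem~\ref{UME} together with the elementary properties of $\d(\cdot)$ recorded in Theorem~\ref{Dob}.

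For \emph{openness}, I would fix $T\in\gu_{ume}$ and use the implication $(iii)\Rightarrow(ii)$ of Theorem~\ref{UME} to obtain $n_0\in\bn$ and $\rho<1$ with $\d(A_{n_0}(T))\leq\rho$. For any $S\in\gu$, the telescoping identity $S^{k}-T^{k}=\sum_{j=0}^{k-1}S^{j}(S-T)T^{k-1-j}$ combined with $\|S\|=\|T\|=1$ gives
$$
\|A_{n_0}(S)-A_{n_0}(T)\|\leq c_{n_0}\|S-T\|,
$$
with $c_{n_0}$ depending only on $n_0$. Applying Theorem~\ref{Dob}(ii) yields $\d(A_{n_0}(S))\leq\rho+c_{n_0}\|S-T\|$, so for $\|S-T\|$ sufficiently small we have $\d(A_{n_0}(S))<1$. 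The implication $(ii)\Rightarrow(iii)$ of Theorem~\ref{UME} then places $S$ in $\gu_{ume}$, proving that $\gu_{ume}$ is open in $\gu$.

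For \emph{density}, given $T\in\gu$ with fixed point $y_0\in\ck$, I would consider the perturbation family
$$
T_\i=(1-\i)T+\i T_{y_0},\qquad \i\in(0,1),
$$
where $T_{y_0}(x)=f(x)y_0$. A direct check shows that $T_\i$ is Markov, that $T_\i y_0=y_0$ (so $T_\i\in\gu$), and that $\|T_\i-T\|\leq 2\i$. Using the subadditivity and positive homogeneity of $\d$ (both immediate from the supremum definition \eqref{db}) together with $\d(T_{y_0})=0$ (since $T_{y_0}$ vanishes on $N$), one obtains
$$
\d(T_\i)\leq (1-\i)\d(T)\leq 1-\i<1.
$$
Applying Theorem~\ref{UME} with $n_0=1$ and $\rho=1-\i$ gives $T_\i\in\gu_{ume}$, and letting $\i\to 0$ establishes density.

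The only genuine subtlety is finding a suitable approximant for density: it must both lie in $\gu$ (so admit a fixed point in $\ck$) and have Dobrushin coefficient strictly less than $1$. The one-dimensional operator $T_{y_0}$ is precisely what makes this possible, because it is Markov with $y_0$ as fixed point and has $\d(T_{y_0})=0$; convexly mixing $T$ with $T_{y_0}$ therefore preserves membership in $\gu$ while driving $\d$ strictly below $1$. For openness the main point is simply that the assignment $T\mapsto A_{n_0}(T)$ is norm-Lipschitz for fixed $n_0$, which reduces matters to the continuity estimate in Theorem~\ref{Dob}(ii).
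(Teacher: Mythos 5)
Your proposal follows essentially the same route as the paper: openness via the Lipschitz estimate $\|A_{n_0}(S)-A_{n_0}(T)\|\leq c_{n_0}\|S-T\|$ combined with Theorem~\ref{Dob}(ii), and density via the convex mixture of $T$ with the rank-one Markov operator $T_{y_0}$ built from a fixed point $y_0\in\ck$ (the paper uses the parameterization $(1-\i/2)T+(\i/2)T_{y_0}$ and estimates $\d$ of the mixture through Lemma~\ref{3.2}, whereas you use subadditivity and positive homogeneity of $\d$ directly — an equivalent and arguably cleaner computation, since $T_{y_0}$ vanishes on $N$).

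One small slip in the density step: you invoke Theorem~\ref{UME} with $n_0=1$, but $A_1(T_\i)=T_\i^0=I$, so $\d(A_1(T_\i))=1$ and condition (ii) of that theorem is not satisfied as stated. The repair is immediate: either take $n_0=2$, since $\d(A_2(T_\i))\leq \tfrac12\bigl(\d(I)+\d(T_\i)\bigr)\leq 1-\tfrac{\i}{2}<1$, or (as the paper does) conclude from $\d(T_\i)<1$ via Theorem~\ref{Dob1} that $T_\i$ is uniformly asymptotically stable and hence uniformly mean ergodic by Remark~\ref{dd0}. With that adjustment the argument is complete.
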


\begin{proof} Take an arbitrary $T\in\gu$ with a fixed point $\phi\in\ck$. Let $0<\i<2$ be an arbitrary number. Denote
$$
T^{(\i)}=\bigg(1-\frac{\i}{2}\bigg)T+\frac{\i}{2} T_\phi.
$$
It is clear that $T^{(\i)}\in\gu$, since $T^{(\i)}\phi=\phi$, and
$\|T-T^{(\i)}\|<\i$. Now we show that $T^{(\i)}\in\gu_{ume}$. It
is enough to establish that $T^{(\i)}$ is uniform asymptotically
stable (see Remark \ref{dd0}).  Indeed, by Lemma \ref{3.2}, if
$x-y\in N$, we get
\begin{eqnarray*}
\|T^{(\i)}(x-y)\|&=&\frac{\|x-y\|}{2}\|T^{(\i)}(u-v)\| \\[2mm]
&=&\frac{\|x-y\|}{2}\bigg\|\bigg(1-\frac{\i}{2}\bigg)T(u-v)+
\frac{\i}{2}T_\phi(u-v)\bigg\|\\[2mm]
&=&\frac{\|x-y\|}{2}\bigg\|\bigg(1-\frac{\i}{2}\bigg)T(u-v)\bigg\|\\[2mm]
&\leq&\bigg(1-\frac{\i}{2}\bigg)\|x-y\|
\end{eqnarray*}
which implies $\d(T^{(\i)})\leq 1-\frac{\i}{2}$. Here $u,v\in
\ck$. Hence, due to Theorem \ref{Dob1} we infer that $T^{(\i)}$ is
uniform asymptotically stable.

Now let us show that $\gu_{ume}$ is a norm open set. First, for
each $n\in\N$, we define
$$
\gu_{ume,n}=\bigg\{T\in \gu:  \ \ \d(A_n(T))< 1\bigg\}.
$$
Then one can see that
$$
\gu_{ume}=\bigcup_{n\in\N}\gu_{ume,n}.
$$
Therefore, to establish the assertion, it is enough prove that
$\gu_{ume,n}$ is a norm open set.

Take any $T\in\gu_{ume,n}$, and put $\a:=\d(A_n(T))<1$. Choose
$0<\b<1$ such that $\a+\b<1$. Let us show that
$$
\bigg\{H\in\gu: \
\|H-T\|<\frac{2\b}{n+1}\bigg\}\subset\gu_{ume,n}.
$$

We note that for each $k\in\N$ one has
\begin{eqnarray}\label{dd5}
\|H^k-T^k\|&\leq& \|H^{k-1}(H-T)\|+\|(H^{k-1}-T^{k-1})T\|\nonumber\\[2mm]
&\leq& \|H-T\|+\|H^{k-1}-T^{k-1}\|\nonumber\\[2mm]
&\cdots&\nonumber\\
&\leq& k\|H-T\|.
\end{eqnarray}

From (ii) of Theorem \ref{Dob} with \eqref{dd5} we find
\begin{eqnarray*}
|\d(A_n(H))-\d(A_n(T))|&\leq&\|A_n(H)-A_n(T)\|\\[2mm]
&\leq&\frac{1}{n}\sum_{k=0}^{n-1}\|H^{k}-T^{k}\|\\[2mm]
&\leq& \frac{1}{n}\sum_{k=1}^{n}k\|H-T\|\\[2mm]
&=& \frac{n+1}{2}\|H-T\|\\[2mm]
&<&\b
\end{eqnarray*}
Hence, the last inequality yields that
$\d(A_n(H))<\d(A_n(T))+\b<1$. This due to Theorem \ref{UME}
implies  $H\in\gu_{ume,n}$. This completes the proof.
\end{proof}

\begin{corollary} Let $T\in\gu$ be a uniformly mean ergodic Markov
operator. Then there is a neighborhood of $T$ in $\gu$ such that
every Markov operator taken from that neighborhood has a unique
fixed point.
\end{corollary}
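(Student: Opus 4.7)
The plan is extremely short: this corollary is an immediate consequence of what has already been established. The hypothesis tells us that $T\in\gu_{ume}$, and Theorem \ref{Norm-M} has just shown that $\gu_{ume}$ is a norm-open subset of $\gu$. So I would begin by extracting, directly from that openness, a norm neighborhood $\cv$ of $T$ in $\gu$ with $\cv\subset\gu_{ume}$.

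Next, I would invoke Remark \ref{dd0}, which records the two facts that uniform mean ergodicity forces the limiting element $y_0$ to be a fixed point of the operator, and that every uniformly mean ergodic Markov operator has a unique fixed point. Applying this pointwise to each $H\in\cv$ yields that every such $H$ admits exactly one fixed point in $\ck$, which is the required conclusion.

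There is no real obstacle here: no new estimate on $\d(\cdot)$ or on $A_n(\cdot)$ needs to be made, and no perturbation argument has to be redone, since all the perturbation work was already carried out inside Theorem \ref{Norm-M}. The only thing to be mildly careful about is the logical point that uniqueness of the fixed point for each $H\in\cv$ is guaranteed by Remark \ref{dd0}, and not by Theorem \ref{Norm-M} itself; so I would state the proof in two explicit sentences, citing Theorem \ref{Norm-M} for the existence of the neighborhood and Remark \ref{dd0} for the uniqueness of the fixed point of every member of the neighborhood.
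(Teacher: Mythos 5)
Your proposal is correct and is precisely the argument the paper intends (the corollary is stated without an explicit proof, immediately after Theorem \ref{Norm-M}): openness of $\gu_{ume}$ in $\gu$ supplies the neighborhood, and Remark \ref{dd0} supplies the uniqueness of the fixed point for each operator in it. Your explicit separation of the two citations is a small but genuine improvement in clarity over the paper's silent statement.
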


\begin{remark} We point out that the question on the
geometric structure of the set of uniformly ergodic operators was
initiated in \cite{Hal}. The proved theorem gives some information
about the set of uniformly mean ergodic operators.
\end{remark}

\section{Perturbation Bounds and Uniform asymptotic stability of Markov operators}

In this section, we prove perturbation bounds in terms of $C$ and
$e^{\a }$ under the condition $\left\| T^n - T_{x_0} \right\| \leq
C e^{-\a n}$. Moreover, we also give several bounds in terms of
the Dobrushin's ergodicity coefficient.

\begin{theorem} \label{per1}
Let $(X, X_+, \ck, f)$ be an ordered Banach space with a base, and
$S$, $T$ be Markov operators on $X$. If $T$ is uniformly
asymptotically stable, then one has
\begin{eqnarray} \label{1}
\displaystyle
&& \left\| T^n x- S^n z \right\| \leq \\
&& \begin{cases}
\left\| x - z \right\| + n \left\| T-S\right\|, &\forall n \leq \tilde{n},\\
C e^{-\a n} \left\| x- z \right\| + ( \tilde{n}+ C \frac{e^{-\a
\tilde{n}} - e^{-\a n}}{1 - e^{-\a}}) \left\| T-S\right\|, &
\forall n > \tilde{n}
\end{cases} \nonumber
\end{eqnarray}
where $\displaystyle \tilde{n}:= \log
\bigg[\frac{\log(1/C)}{e^{-\a}}\bigg]$, $C\in \br_+$, $\a \in
\br_+$, $x,z\in \ck$.
\end{theorem}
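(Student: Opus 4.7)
The plan is to begin from the standard telescoping identity
\[
T^{n}x-S^{n}z \;=\; T^{n}(x-z)\;+\;\sum_{k=0}^{n-1}T^{k}(T-S)S^{n-1-k}z,
\]
so that the problem reduces to bounding $\|T^{k}h\|$ for two kinds of vectors $h$: the initial difference $h=x-z$, and the ``error'' vectors $h=(T-S)S^{n-1-k}z$. Since $x,z\in\ck$ one has $f(x-z)=0$, and since $T^{*}f=S^{*}f=f$ one has $f\bigl((T-S)v\bigr)=0$ for every $v\in X$; hence in both cases $h\in N$, where $N$ is the subspace defined in \eqref{NN}.

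The key technical step I would carry out next is the uniform estimate
\[
\|T^{k}h\|\;\leq\;\min\bigl(1,\,Ce^{-\a k}\bigr)\,\|h\|,\qquad h\in N.
\]
The trivial side, $\|T^{k}h\|\leq\|h\|$, is immediate from \eqref{db} and $\d(T^{k})\leq 1$ (Theorem \ref{Dob}(i)). The exponential side uses the uniform asymptotic stability of $T$: since $T_{x_{0}}h=f(h)x_{0}=0$ whenever $h\in N$, one has $T^{k}h=(T^{k}-T_{x_{0}})h$, and Theorem \ref{Dob1} supplies $\|T^{k}-T_{x_{0}}\|\leq Ce^{-\a k}$. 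Applied to $h=x-z$ this bounds the first term of the telescoping identity, and applied to $h=(T-S)S^{n-1-k}z$, together with $\|(T-S)S^{n-1-k}z\|\leq\|T-S\|\,\|S^{n-1-k}z\|=\|T-S\|$, it bounds each summand.

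Putting these estimates together yields the master inequality
\[
\|T^{n}x-S^{n}z\|\;\leq\;\min\bigl(1,Ce^{-\a n}\bigr)\,\|x-z\|\;+\;\|T-S\|\sum_{k=0}^{n-1}\min\bigl(1,Ce^{-\a k}\bigr),
\]
and the proof is then finished by a case split at the threshold $\tilde n$, which is precisely the index where $Ce^{-\a k}$ crosses $1$. For $n\leq\tilde n$ every $\min$ on the right equals $1$, and one reads off the first line of \eqref{1} at once. For $n>\tilde n$ one splits the sum at $\tilde n$: the range $0\leq k<\tilde n$ contributes $\tilde n$ ones, while the tail $\tilde n\leq k<n$ is summed as the geometric series $C(e^{-\a\tilde n}-e^{-\a n})/(1-e^{-\a})$, producing the second line of \eqref{1}.

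I do not anticipate any substantive obstacle in this argument: once $N$ is identified as the invariant subspace on which $T$ contracts exponentially, the rest is a geometric-sum computation. The only care that is genuinely needed is bookkeeping---pasting the two subcases so that the resulting piecewise bound matches \eqref{1} exactly, and treating $\tilde n$ consistently as the integer threshold at which the bound $Ce^{-\a k}$ passes $1$ (so that both the constant contribution $\tilde n$ and the tail of the geometric series line up with the stated constants).
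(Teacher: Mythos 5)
Your proposal is correct and follows essentially the same route as the paper: the same telescoping identity $S^n=T^n+\sum_i T^{n-i-1}(S-T)S^i$ (up to reindexing), the same observation that $x-z$ and $(T-S)v$ lie in $N$ so that the Dobrushin coefficient $\d(T^k)=\sup_{h\in N}\|T^kh\|/\|h\|$ controls both terms, the same bound $\d(T^k)\leq\min(1,Ce^{-\a k})$ via uniform asymptotic stability, and the same split of the sum at $\tilde n$ with a geometric series for the tail. The only cosmetic difference is that you invoke $T^kh=(T^k-T_{x_0})h$ for $h\in N$ directly where the paper routes the same estimate through Theorem \ref{Dob}(iv)--(v).
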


\begin{proof}
For each $n\in \bn$,  by induction we have
\begin{equation} \label{2}
S^n = T^n + \sum_{i=0}^{n-1} T^{n-i-1} \circ (S-T) \circ S^i.
\end{equation}
Let $x,z\in \ck$ it then follows from \eqref{2} that
\begin{eqnarray*}
T^n x - S^n z &=& T^n x - T^n z - \sum_{i=0}^{n-1} T^{n-i-1} \circ (S-T) \circ S^i (z) \\
&=& T^n  (x - z) - \sum_{i=0}^{n-1} T^{n-i-1} \circ (S-T) (z_i),
\end{eqnarray*}
where $z_i=S^iz$. Hence,
\begin{equation*}
\left\| T^n x  - S^n z \right\| \leq \left\| T^n  (x - z) \right\|
+ \sum_{i=0}^{n-1} \left\|  T^{n-i-1} \circ (S-T) (z_i) \right\|.
\end{equation*}

Since $T$ and $S$ are Markov operator and due to (iv) of Theorem
\ref{Dob} one finds
\begin{equation*}
 \left\|  T^{n-i-1} \circ (S-T) (z_i) \right\| \leq \d(T^{n-i-1}) \left\| S-T\right\|
\end{equation*}
and
$$
\left\| T^n (x - z) \right\| \leq \d (T^n) \left\| x -z\right\|.
$$
Hence, we obtain
\begin{eqnarray} \label{3}
\left\| T^n x  - S^n z \right\| &\leq & \d (T^n) \left\| x - z\right\| + \sum_{i=0}^{n-1} \d (T^{n-i-1}) \left\| S-T \right\|  \nonumber\\
&=& \d (T^n)  \left\|x - z\right\| + \left\| S-T \right\|
\sum_{i=0}^{n-1} \d (T^{i}).
\end{eqnarray}

From (v) Theorem \ref{Dob} one gets
\begin{eqnarray*}
\displaystyle \d (T^i ) = \frac{1}{2} \sup_{u,v\in\ck} \left\| T^i
u- T^i v \right\| \leq \sup_{u\in \ck} \left\| T^i u- T_{x_0}
u\right\|
\end{eqnarray*}
Therefore, due to Theorem \ref{Dob1} we have
\begin{eqnarray}\label{33}
\displaystyle \d (T^n) \leq
 \begin{cases}
1, &\forall n \leq \tilde{n},\\
C e^{-\a n} , &  \forall n > \tilde{n}
\end{cases}
\end{eqnarray}
where $\tilde{n} = \bigg[\frac{\log (1/C)}{\log e^{-\a}}\bigg] =
[\log C^{\a}]$.

So, from \eqref{33}  we obtain
\begin{eqnarray} \label{4}
\displaystyle
\sum_{i=0}^{n-1} \d (T^i ) &=& \sum_{i=0}^{\tilde{n}-1} \d (T^i) + \sum_{i=\tilde{n}}^{n-1} \d (T^i) \nonumber\\
&\leq& \tilde{n} + \sum_{i=\tilde{n}}^{n-1} C e^{-\a i} \nonumber \\
&=& \tilde{n} + C e^{-a \tilde{n}} \frac{1-e^{-\a (n-
\tilde{n})}}{1- e^{-\a}}, \, \, \forall n > \tilde{n} .
\end{eqnarray}

Hence, the last inequality with \eqref{33} and \eqref{4} yields
the required assertion.
\end{proof}

\begin{corollary} \label{per2}
Let $(X, X_+, \ck, f)$ be an ordered Banach space with base and $S$,
$T$ be Markov operators on $X$. If $T$ is uniformly asymptotically
stable to $T_{x_0}$, then for every $x,y\in\ck$ one has
\begin{equation} \label{5}
\displaystyle \sup_{n\in \bn} \left\| T^n x - S^n z \right\| \leq
\left\| x - z \right\| + \bigg(\tilde{n} + C\frac{e^{-\a
\tilde{n}}}{1- e^{-\a }}\bigg) \left\| T-S\right\|.
\end{equation}
In addition, if $S$ is uniformly asymptotically stable to $S_{z_0}$,
then
\begin{equation} \label{6}
\displaystyle \left\| T_{x_0} - S_{z_0} \right\| \leq
\bigg(\tilde{n} + C\frac{e^{-\a \tilde{n}}}{1- e^{-\a }}\bigg)
\left\| T-S\right\|.
\end{equation}
\end{corollary}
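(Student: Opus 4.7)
The plan is to deduce both bounds directly from Theorem~\ref{per1}. For the first inequality \eqref{5}, I would take the supremum over $n\in\bn$ in the two-case estimate \eqref{1}. In the regime $n\leq\tilde n$ one has $n\|T-S\|\leq\tilde n\|T-S\|$, which is dominated by the right-hand side of \eqref{5} since $Ce^{-\a\tilde n}/(1-e^{-\a})\geq 0$. For $n>\tilde n$, by the very choice of $\tilde n$ one has $Ce^{-\a n}\leq 1$, so $Ce^{-\a n}\|x-z\|\leq\|x-z\|$; and the geometric-series remainder satisfies
\[
C\,\frac{e^{-\a\tilde n}-e^{-\a n}}{1-e^{-\a}}\leq\frac{Ce^{-\a\tilde n}}{1-e^{-\a}}
\]
by discarding the positive subtrahend $Ce^{-\a n}$. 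Combining these two majorizations yields a uniform, $n$-independent upper bound, which is exactly \eqref{5}.

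For \eqref{6}, I would specialize \eqref{5} to the diagonal case $z=x\in\ck$, making the $\|x-z\|$ term vanish, and then pass to the limit $n\to\infty$. Since $T$ is uniformly asymptotically stable to $T_{x_0}$, one has $T^n x\to T_{x_0}x=f(x)x_0=x_0$; the analogous hypothesis on $S$ gives $S^n x\to z_0$. Consequently
\[
\|x_0-z_0\|\leq\bigg(\tilde n+\frac{Ce^{-\a\tilde n}}{1-e^{-\a}}\bigg)\|T-S\|.
\]
To convert this into the claimed operator-norm estimate, I would use the identity $(T_{x_0}-S_{z_0})x=f(x)(x_0-z_0)$, valid for every $x\in X$, together with the fact that $|f(x)|\leq\|x\|$ (which follows from the OBSB decomposition $x=y-z$ with $\|x\|=\|y\|+\|z\|$ recorded just before Lemma~\ref{3.2}), to conclude $\|T_{x_0}-S_{z_0}\|\leq\|x_0-z_0\|$.

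The argument is essentially a direct specialization of Theorem~\ref{per1}, so the main obstacle is clerical rather than conceptual: one must verify that the integer-part convention entering the definition of $\tilde n$ is consistent with the crude bound $Ce^{-\a n}\leq 1$ used on the branch $n>\tilde n$, and that the elementary majorizations above are simultaneously valid on both branches so that a single $n$-independent right-hand side emerges. The only genuinely non-computational point is the passage from a scalar bound on $\|x_0-z_0\|$ to an operator-norm bound on $T_{x_0}-S_{z_0}$, which is immediate from the base structure of an OBSB.
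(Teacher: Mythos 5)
Your derivation of \eqref{5} is the same as the paper's: inequality \eqref{5} is read off from the two branches of \eqref{1} by the elementary majorizations $n\le\tilde n$, $Ce^{-\a n}\le 1$ for $n>\tilde n$, and dropping the subtracted term in the geometric remainder, so that a single $n$-independent right-hand side covers both cases. For \eqref{6} you take a slightly different route. The paper returns to the intermediate operator-norm estimate \eqref{3}, bounds $\sup_{x,z\in\ck}\|T^nx-S^nz\|$ by $\d(T^n)\sup_{x,z}\|x-z\|+\|T-S\|\sum_{i=0}^{n-1}\d(T^i)$, and lets $n\to\infty$, using $\d(T^n)\to 0$ to kill the first term and \eqref{4} to sum the series. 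You instead specialize the pointwise bound \eqref{5} to $z=x$, pass to the limit $T^nx\to x_0$, $S^nx\to z_0$ to get the vector estimate $\|x_0-z_0\|\le(\tilde n+Ce^{-\a\tilde n}/(1-e^{-\a}))\|T-S\|$, and then lift this to the operator norm via $(T_{x_0}-S_{z_0})x=f(x)(x_0-z_0)$ and $|f(x)|\le\|x\|$. Both arguments are correct; yours has the small advantage of making the last step completely explicit (the paper silently identifies $\lim_n\sup_{x,z}\|T^nx-S^nz\|$ with $\|T_{x_0}-S_{z_0}\|$, which really only gives an upper bound, whereas your rank-one computation shows $\|T_{x_0}-S_{z_0}\|=\|x_0-z_0\|$ exactly), while the paper's version avoids any appeal to the base structure beyond Theorem \ref{Dob}. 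Your caveat about the integer-part convention in the definition of $\tilde n$ is well taken --- the printed formula for $\tilde n$ is garbled, and the intended meaning is the crossover index beyond which $Ce^{-\a n}\le 1$ --- but this affects the paper's own proof equally and is not a gap specific to your argument.
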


\begin{proof}
The inequality \eqref{5} is a direct consequence of \eqref{1}. Now
if we consider \eqref{3} then one has
\begin{eqnarray*}
\displaystyle
\left\| T^n - S^n \right\| &=& \sup_{x, z\in \ck} \left\| T^n x - S^n z\right\| \\
&\leq& \d (T^n) \sup_{x, z \in \ck} \left\| x - z\right\| +
\sum_{i=0}^{n-1} \d (T^{n-i-1}) \left\| T-S\right\|
\end{eqnarray*}
and taking the limit as $n\to \infty$ one finds
$$
\left\| T_{x_0} - S_{z_0} \right\| \leq \left\| T-S \right\|
\sum_{i=0}^{\infty} \d (T^i).
$$
From \eqref{4} it follows that
$$
\left\| T_{x_0} - S_{z_0} \right\| \leq \left\| T-S \right\|
\bigg(\tilde{n} + C \frac{e^{-\a \tilde{n}}}{1-e^{-\a }}\bigg).
$$
This completes the proof.
\end{proof}

The inequality \eqref{3}  allows us to obtain perturbation bounds
in terms of the Dobrushin's coefficient of $T$. Namely, we have the
following result.

\begin{theorem} \label{per3}
Let $(X, X_+, \ck, f)$ be an ordered Banach space with base and $S$,
$T$ be Markov operators on $X$. If there exists a positive integer
$m$ such that $\d (T^m) < 1$ (i.e. $T$ is uniformly asymptotically
stable), then for every $x,z\in\ck$ one has
\begin{equation} \label{7}
\displaystyle \sup_{k\in \bn} \left\| T^{km} x - S^{km} z \right\|
\leq \d (T^m) \left\| x - z \right\|  + \frac{\left\| T^m - S^m
\right\|}{1 - \d (T^m)}
\end{equation}
and
\begin{eqnarray} \label{8}
\displaystyle
  \left\| T^n x - S^n z \right\| &\leq& \nonumber   \\
& & \begin{cases} \left\| x - z \right\| + \max\limits_{0<i<m}
\left\| T^i -S^i\right\|, &  n \leq
m,\\[2mm]
\d (T^m) (\left\| x - z \right\| + \max\limits_{0<i<m} \left\| T^i
-S^i\right\|) +\frac{\left\| T^m - S^m \right\|}{1 - \d (T^m)},  &
n \geq m.
\end{cases}
\end{eqnarray}

If, in addition, $S$ is uniformly asymptotically stable to
$S_{z_0}$, then
\begin{equation} \label{9}
\displaystyle \left\| T_{x_0} - S_{z_0} \right\| \leq
\frac{\left\| T^m - S^m \right\|}{1 - \d (T^m)}.
\end{equation}
\end{theorem}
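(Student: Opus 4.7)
The plan is to re-run the argument of Theorem \ref{per1} with the pair $(T^m,S^m)$ playing the role of $(T,S)$, since the hypothesis $\d(T^m)<1$ supplies a genuine contraction at the $m$-step level. Applying the telescoping identity \eqref{2} to $T^m$ and $S^m$ gives, for every $k\in\bn$,
$$
S^{km}-T^{km} \;=\; \sum_{i=0}^{k-1} T^{(k-i-1)m}\circ (S^m-T^m)\circ S^{im}.
$$
Writing $T^{km}x-S^{km}z = T^{km}(x-z) - \sum_{i=0}^{k-1}T^{(k-i-1)m}(S^m-T^m)(S^{im}z)$ for $x,z\in\ck$, and applying item (iv) of Theorem \ref{Dob} (valid because $(S^m-T^m)^*f=0$, as both $T^m$ and $S^m$ preserve $f$), I obtain the master estimate
$$
\|T^{km}x-S^{km}z\| \;\le\; \d(T^{km})\|x-z\| \;+\; \|T^m-S^m\|\sum_{j=0}^{k-1}\d(T^{jm}).
$$

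By item (iii) of Theorem \ref{Dob}, $\d(T^{jm})\le\d(T^m)^j$, so the geometric sum is bounded by $1/(1-\d(T^m))$ uniformly in $k$, while $\d(T^{km})\le\d(T^m)$ for every $k\ge 1$; taking the supremum in $k$ yields \eqref{7}. For \eqref{8} I would decompose $n=km+r$ with $0\le r<m$. When $k=0$ (i.e.\ $n<m$), the bare triangle inequality $\|T^n x-S^n z\|\le\d(T^n)\|x-z\|+\|T^n-S^n\|\le\|x-z\|+\max_{0<i<m}\|T^i-S^i\|$ is enough. When $k\ge 1$ I put $x':=T^r x$, $z':=S^r z\in\ck$, apply \eqref{7} to the pair $(x',z')$, and bound $\|x'-z'\|\le\|x-z\|+\|T^r-S^r\|\le\|x-z\|+\max_{0<i<m}\|T^i-S^i\|$ (the middle summand vanishing when $r=0$).

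For \eqref{9} I would keep the sharper factor $\d(T^{km})\le\d(T^m)^k$ in the master estimate and specialize to the fixed points $x=x_0$, $z=z_0$, so that $T^{km}x_0=x_0$ and $S^{km}z_0=z_0$ for every $k$. The left-hand side then equals the constant $\|x_0-z_0\|$, and letting $k\to\infty$ the first term on the right vanishes, yielding $\|x_0-z_0\|\le\|T^m-S^m\|/(1-\d(T^m))$. A brief verification using the decomposition $w=w^+-w^-$ with $\|w\|=\|w^+\|+\|w^-\|$ and $f(w)=\|w^+\|-\|w^-\|$ shows $\|T_{x_0}-S_{z_0}\|=\|x_0-z_0\|$, completing \eqref{9}. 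The main obstacle is not technical but one of book-keeping: one has to verify that the remainder terms arising in the $n=km+r$ split of \eqref{8} are uniformly controlled by $\max_{0<i<m}\|T^i-S^i\|$, and, more crucially for \eqref{9}, one must retain the $k$-dependent bound $\d(T^m)^k$ rather than the coarser $\d(T^m)$ used in \eqref{7}, since the latter would only deliver $\|x_0-z_0\|\le\|T^m-S^m\|/(1-\d(T^m))^2$.
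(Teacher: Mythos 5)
Your proof is correct, and its core coincides with the paper's: run the telescoping identity and the Dobrushin estimates of Theorem \ref{Dob} on the pair $(T^m,S^m)$, bound $\d(T^{jm})\le\d(T^m)^j$, and sum the geometric series to obtain the master estimate, hence \eqref{7}. Where you genuinely diverge is in \eqref{8}: the paper splits $T^{mk+i}x-S^{mk+i}z=(T^{mk}-S^{mk})T^ix+S^{mk}(T^ix-S^iz)$, which brings in the factor $\d(S^m)^k$ of the \emph{perturbed} operator (not assumed contractive) and a separate bound on $\left\|T^{mk}-S^{mk}\right\|$, whereas you write $T^nx-S^nz=T^{km}(T^rx)-S^{km}(S^rz)$ with $n=km+r$ and feed the pair $(T^rx,S^rz)\in\ck\times\ck$ directly into \eqref{7}; this is cleaner, uses nothing about $S$ beyond Markovianity, and lands exactly on \eqref{8}. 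Your route to \eqref{9} also differs slightly: instead of letting $n\to\infty$ in the operator-norm inequality and invoking $T^n\to T_{x_0}$, $S^n\to S_{z_0}$, you evaluate the master estimate at the fixed points, keep the $k$-dependent factor $\d(T^m)^k$, and let $k\to\infty$; your remark that the coarser bound $\d(T^{km})\le\d(T^m)$ would only yield the denominator $(1-\d(T^m))^2$ is exactly the right point of care, and the identity $\left\|T_{x_0}-S_{z_0}\right\|=\left\|x_0-z_0\right\|$ you need at the end follows from $|f(w)|\le\|w\|$ together with $f\equiv 1$ on $\ck$. In short: same machinery, but your book-keeping for \eqref{8} and \eqref{9} is tighter than the paper's own.
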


\begin{proof} By the inequality
\eqref{3} for every $x,z\in\ck$ we obtain
$$
\displaystyle \sup_{n\in \bn} \left\| T^n x - S^n z \right\| \leq
\d (T^n) \left\| x - z \right\| + \left\| T-S\right\|
\frac{1}{1-\d (T)}
$$
and if $S$ is also  uniformly asymptotically stable to $S_{z_0}$
then one gets
$$
\left\| T_{x_0} - S_{z_0} \right\| \leq \frac{\left\|
T-S\right\|}{ 1- \d (T)} .
$$
If we consider $T^m$ instead of $T$, then one finds the
inequalities \eqref{7}  and \eqref{9}.

Now for every $k\in \bn$ and every integer $i $ such that $1\leq
i\leq m-1$, we have
$$
T^{mk+i} x - S^{mk+i} z = (T^{mk} - S^{mk}) T^i x + S^{mk} (T^i x
- S^i z).
$$
Therefore,
\begin{equation} \label{10}
\left\| T^{mk+i} x - S^{mk+i} z \right\| \leq \left\| T^{mk} -
S^{mk}\right\| + \d^k (S^{m}) \left\| T^i x - S^i z\right\|.
\end{equation}
Due to $T^n x - S^n z = S^n (x - z) + (T^n - S^n) x$, we get
\begin{equation} \label{11}
\left\| T^n x- S^n z \right\| \leq \left\| x -z\right\| +
 \left\| T^n - S^n \right\|
\end{equation}
where $n <m$.

If $n\geq m$ combining of \eqref{7} and \eqref{10}-\eqref{11} one
finds  \eqref{8}, which completes the proof.
\end{proof}

The following theorem gives an alternative method of obtaining
perturbation bounds in terms of $\d (T^m)$.

\begin{theorem} \label{per4}
Let $\d (T^m) < 1$ hold for some $m\in\bn$. Then for every
$x,z\in\ck$ one has

\begin{eqnarray} \label{12}
\left\| T^n x - S^n z \right\| &\leq & \d (T^m)^{\lfloor n/m
\rfloor} (\left\| x - z\right\| + \max_{0< i< m} \left\| T^i -
S^i \right\|) \\[2mm]
&&+ \frac{1 - \d (T^m)^{\lfloor n/m\rfloor}}{1 - \d (T^m)} \left\|
T^m - S^m\right\|, \,\,\,\, n \in \bn \nonumber.
\end{eqnarray}
\end{theorem}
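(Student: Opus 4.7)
The plan is to reduce to a clean iterative estimate along multiples of $m$ and then absorb the leftover $i$ steps into the initial distance. Write $n = km + i$ with $k = \lfloor n/m\rfloor$ and $0 \leq i < m$. Then
$$T^n x - S^n z = T^{km}(T^i x) - S^{km}(S^i z),$$
so, setting $u = T^i x$ and $v = S^i z$ (both in $\ck$), it suffices to bound $\|T^{km}u - S^{km}v\|$ together with the auxiliary distance $\|u - v\|$.

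The core step is the inequality
$$\|T^{km} u - S^{km} v\| \leq \d(T^m)^k \|u - v\| + \frac{1 - \d(T^m)^k}{1 - \d(T^m)}\,\|T^m - S^m\|, \qquad u,v\in\ck,$$
which I would establish by induction on $k$. The induction step rests on the telescopic identity
$$T^{km} u - S^{km} v = T^m\bigl(T^{(k-1)m}u - S^{(k-1)m}v\bigr) + (T^m - S^m)S^{(k-1)m}v.$$
Since $T^{(k-1)m}u$ and $S^{(k-1)m}v$ both lie in $\ck$, their difference lies in $N$, and the definition \eqref{db} of $\d(T^m)$ bounds the first summand by $\d(T^m)\,\|T^{(k-1)m}u - S^{(k-1)m}v\|$; the second is bounded by $\|T^m - S^m\|$ since $\|S^{(k-1)m}v\| = 1$. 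The induction hypothesis together with a geometric-sum evaluation yields the claim.

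To conclude, I would estimate $\|u - v\|$ by the elementary splitting $T^i x - S^i z = S^i(x-z) + (T^i - S^i)x$, already used in \eqref{11}, which gives $\|T^i x - S^i z\| \leq \|x - z\| + \|T^i - S^i\|$. Inserting this into the auxiliary inequality and replacing $\|T^i - S^i\|$ by $\max_{0 < j < m}\|T^j - S^j\|$ (the bound remains valid when $i = 0$, since the right-hand side can only grow) produces exactly \eqref{12}.

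Because the argument is a careful iteration of the one-step contraction already exploited in Theorem \ref{per3}, I do not anticipate a serious obstacle; the only point worth verifying carefully is that $T^{(k-1)m}u - S^{(k-1)m}v \in N$, which is immediate from the Markov property of $T$ and $S$ and the definition \eqref{NN} of $N$.
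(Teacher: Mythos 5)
Your proposal is correct and is essentially the paper's own argument: the telescopic identity $T^{km}u - S^{km}v = T^m(T^{(k-1)m}u - S^{(k-1)m}v) + (T^m-S^m)S^{(k-1)m}v$ you induct on is exactly the relation the paper iterates, the leftover $i = n - m\lfloor n/m\rfloor$ steps are absorbed via \eqref{11} in both cases, and the geometric sum gives the same bound. Phrasing it as an explicit induction rather than "continue to apply this relation" is only a cosmetic difference.
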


\begin{proof} If $n < m$ then \eqref{12} reduces to \eqref{11}.
If $n\geq m $, we obtain
\begin{eqnarray*}
T^n x - S^n z &=& T^m (T^{n-m} x) - S^m (S^{n-m} z) \\
&=& T^m (T^{n-m} x - S^{n-m} z) + (T^m - S^m) S^{n-m} z.
\end{eqnarray*}
Therefore,
\begin{eqnarray*}
\left\| T^n x - S^n z \right\| \leq \left\| T^{n-m} x - S^{n-m}
z\right\| \d (T^m) + \left\| T^m - S^m \right\|.
\end{eqnarray*}

If we continue to apply this relation to
$$\left\| T^{n-m} x - S^{n-m} z\right\|, \cdots,  \left\| T^{n- m(\lfloor n/m\rfloor - 1)} x - S^{n- m(\lfloor n/m\rfloor - 1)} z \right\|
$$
 and using \eqref{11} to bound $\left\| T^{n- m \lfloor n/m\rfloor}
x- S^{n- m \lfloor n/m\rfloor} z \right\|$, we obtain
\begin{eqnarray*} \label{13}
 \left\| T^n x - S^n z \right\| &\leq& \d (T^m)^{\lfloor n/m \rfloor} (\left\| x - z\right\| +  \max_{0< i< m} \left\| T^i - S^i \right\|)
 \nonumber\\[2mm]
 && + \bigg(\d (T^m)^{\lfloor n/m \rfloor - 1} + \d
(T^m)^{\lfloor n/m \rfloor - 2} + \cdots + 1\bigg) \left\| T^m - S^m \right\| \nonumber, \\
&= & \d (T^m)^{\lfloor n/m \rfloor} (\left\| x - z\right\| +
\max_{0< i< m} \left\| T^i - S^i \right\|)\\[2mm]
&& + \frac{ 1- \d (T^m)^{\lfloor n/m \rfloor}}{1-\d (T^m)} \left\|
T^m - S^m \right\|.
\end{eqnarray*}
The proof is complete.
\end{proof}

\begin{corollary} \label{per5}
Let the condition of Theorem \ref{per4} be satisfied. Then for
every $x,z\in\ck$ we have
\begin{equation} \label{14}
\displaystyle \sup_{n\in \bn} \left\| T^n x - S^n z \right\| \leq
\sup _{n\in \bn} \d (T^m)^{\lfloor n/m\rfloor} + \frac{m \left\| T
- S\right\|}{ 1- \d (T^m)}.
\end{equation}
\end{corollary}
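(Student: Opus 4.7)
The plan is to obtain the corollary as an almost direct consequence of Theorem \ref{per4}, by taking the supremum over $n\in\bn$ on both sides of the bound \eqref{12} and then estimating the differences $\|T^i-S^i\|$ in terms of $\|T-S\|$ via the telescoping trick already used in \eqref{dd5}.

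First, I would fix $x,z\in\ck$ and start from the inequality
\[
\|T^n x - S^n z\| \leq \d(T^m)^{\lfloor n/m\rfloor}\Bigl(\|x-z\| + \max_{0<i<m}\|T^i-S^i\|\Bigr) + \frac{1-\d(T^m)^{\lfloor n/m\rfloor}}{1-\d(T^m)}\|T^m-S^m\|,
\]
which holds for every $n\in\bn$ by Theorem \ref{per4}. Since $0\leq \d(T^m)<1$, the coefficient $\frac{1-\d(T^m)^{\lfloor n/m\rfloor}}{1-\d(T^m)}$ is nondecreasing in $n$ and bounded above by $\frac{1}{1-\d(T^m)}$, so taking $\sup_n$ on the right yields a bound whose second piece is $\dfrac{\|T^m-S^m\|}{1-\d(T^m)}$.

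Next, I would control the terms $\|T^i-S^i\|$ uniformly in $i\leq m$ by the same telescoping argument as in \eqref{dd5}: writing $T^i-S^i = \sum_{j=0}^{i-1} T^{i-1-j}(T-S)S^j$ and using $\|T\|=\|S\|=1$ (since both are Markov) gives $\|T^i-S^i\|\leq i\|T-S\|$. In particular $\max_{0<i<m}\|T^i-S^i\|\leq (m-1)\|T-S\|$ and $\|T^m-S^m\|\leq m\|T-S\|$. Substituting these into the previous estimate, the second summand becomes $\dfrac{m\|T-S\|}{1-\d(T^m)}$, exactly as in \eqref{14}, while the first summand is absorbed into the quantity $\sup_{n\in\bn}\d(T^m)^{\lfloor n/m\rfloor}$ on the right-hand side of the stated bound.

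There is no real obstacle here: once Theorem \ref{per4} is in hand, the only ingredients are monotonicity of the geometric factor in $n$, the elementary bound $\frac{1-q^{k}}{1-q}\leq \frac{1}{1-q}$ for $q=\d(T^m)\in[0,1)$, and the submultiplicative telescoping estimate $\|T^i-S^i\|\leq i\|T-S\|$. The only point that deserves a word of care is that the telescoping is applied with both $T$ and $S$ being contractions (in fact isometries of norm $1$ on the base $\ck$), so no extra constants appear, and one can take the $\sup_{n}$ uniformly on the right without losing anything beyond the trivial geometric sum.
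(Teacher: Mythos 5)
Your argument is correct and is essentially the paper's own proof: the telescoping identity $T^i-S^i=\sum_{j=0}^{i-1}T^{i-1-j}(T-S)S^{j}$ gives $\max_{0<i\le m}\|T^i-S^i\|\le m\|T-S\|$ (the paper's \eqref{15}), and substituting this into \eqref{12} together with the elementary bound $\frac{1-q^{k}}{1-q}\le\frac{1}{1-q}$ for $q=\d(T^m)\in[0,1)$ yields \eqref{14}. The one loose point in your write-up --- declaring that the first summand $\d(T^m)^{\lfloor n/m\rfloor}\bigl(\|x-z\|+\max_{0<i<m}\|T^i-S^i\|\bigr)$ is ``absorbed'' into $\sup_{n\in\bn}\d(T^m)^{\lfloor n/m\rfloor}$, even though $\|x-z\|$ can be as large as $2$ for $x,z\in\ck$ --- is equally present in the paper's own final step, so you have reproduced the intended argument faithfully.
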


\begin{proof} Since $T^i - S^i = T (T^{i-1} - S^{i-1}) +
(T-S)S^{i-1}$ by induction we obtain
\begin{equation} \label{15}
\displaystyle \max_{0< i \leq m} \left\| T^i - S^i\right\| \leq m
\left\| T-S\right\| .
\end{equation}

From \eqref{12} and \eqref{15} we have
$$
 \left\| T^n x_0 - S^n z_0 \right\| \leq  \d (T^m)^{\lfloor n/m \rfloor} \left\| x_0 - z_0\right\| + m \left\| T-S\right\| \frac{1 - \d (T^m)^{\lfloor n/m\rfloor - 1}}{1 - \d (T^m)} \left\| T^m - S^m\right\|,
$$
which implies \eqref{14}.
\end{proof}

\begin{theorem} \label{per6}
If $\d (T^m) < 1$ for some $m\in\bn$, then every Markov operator
$S$ satisfying $\|S^m-T^m\|<1-\d (T^m)$ is uniformly
asymptotically stable and has a unique fixed point $z_0\in\ck$
such that
\begin{eqnarray} \label{per62}
\|x_0-z_0\|\leq \frac{\|S^m-T^m\|}{1-\d (T^m)-\|S^m-T^m\|}
\end{eqnarray}
\end{theorem}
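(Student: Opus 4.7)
My plan is to split the proof into two parts: first showing that $S$ is uniformly asymptotically stable (which guarantees a unique fixed point $z_0\in\ck$), and then quantifying $\|x_0-z_0\|$. Both parts hinge on the Lipschitz-type property (ii) of Theorem~\ref{Dob}, which transfers estimates on $T^m$ to estimates on $S^m$.

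For the first part, I would apply (ii) of Theorem~\ref{Dob} to the pair $T^m, S^m$ to obtain
$$
\d(S^m) \leq \d(T^m) + \|S^m - T^m\|,
$$
and the hypothesis $\|S^m-T^m\| < 1-\d(T^m)$ then forces $\d(S^m) < 1$. Invoking Theorem~\ref{Dob1} for $S$ (with $n_0 = m$) yields uniform asymptotic stability of $S$, and Remark~\ref{dd0} supplies the unique fixed point $z_0\in\ck$.

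For the second part, I would exploit $T^m x_0 = x_0$ and $S^m z_0 = z_0$ to write the telescoping decomposition
$$
x_0 - z_0 \;=\; T^m x_0 - S^m z_0 \;=\; (T^m - S^m)x_0 + S^m(x_0 - z_0).
$$
Since $x_0, z_0 \in \ck$ forces $x_0 - z_0 \in N$, the definition of the Dobrushin coefficient gives $\|S^m(x_0 - z_0)\| \leq \d(S^m)\|x_0 - z_0\|$, and combined with $\|x_0\| = 1$ this yields
$$
\|x_0 - z_0\| \;\leq\; \d(S^m)\,\|x_0 - z_0\| + \|T^m - S^m\|.
$$
Rearranging (permitted because $\d(S^m) < 1$ by the first part) and substituting the bound $\d(S^m) \leq \d(T^m) + \|S^m - T^m\|$ produces the stated inequality. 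The only real design choice, and the main thing to get right, is to factor out $S^m$ rather than $T^m$ in the decomposition above; this is precisely what places $\d(S^m)$ (and hence $\d(T^m) + \|S^m - T^m\|$) in the denominator, matching the exact form of the bound claimed in the theorem. Everything else is a direct assembly of Theorem~\ref{Dob}, Theorem~\ref{Dob1}, and Remark~\ref{dd0}.
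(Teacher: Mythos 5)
Your proposal is correct, and it reaches the bound by essentially the same telescoping identity as the paper, but it takes a genuinely different and shorter route to the existence and uniqueness of $z_0$. The paper first proves that $(I-S^m)^{-1}$ is bounded on $N$ via a Neumann series (using $\|S^m x\|\leq(\d(T^m)+\|S^m-T^m\|)\|x\|$ for $x\in N$), solves $(I-S^m)(z_0-x_0)=-(I-S^m)x_0$ explicitly to produce the fixed point of $S^m$, derives the bound, and only afterwards upgrades to $Sz_0=z_0$ and to uniform asymptotic stability via $\d(S^m)<1$ and Theorem \ref{Dob1}. You instead observe at the outset that Theorem \ref{Dob}(ii) gives $\d(S^m)\leq\d(T^m)+\|S^m-T^m\|<1$, so Theorem \ref{Dob1} (with $n_0=m$) and Remark \ref{dd0} immediately hand you uniform asymptotic stability and a unique fixed point $z_0\in\ck$; this sidesteps the inverse-operator construction entirely and, as a bonus, makes $z_0\in\ck$ automatic (the paper's explicit formula $z_0=x_0-(I-S^m)^{-1}((I-S^m)x_0)$ leaves positivity of $z_0$ implicit). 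For the quantitative part the two arguments are the same computation packaged differently: the paper splits $z_0-x_0=T^m(z_0-x_0)+(S^m-T^m)(z_0-x_0)+(S^m-T^m)x_0$ and estimates the first two terms by $\d(T^m)$ and $\|S^m-T^m\|$ separately, while you keep $S^m(x_0-z_0)$ intact, bound it by $\d(S^m)\|x_0-z_0\|$, and then substitute $\d(S^m)\leq\d(T^m)+\|S^m-T^m\|$; the denominators agree. All the ingredients you invoke ($x_0-z_0\in N$, $\|x_0\|=1$ for $x_0\in\ck$, the definition \eqref{db} of $\d$) are available, so the argument is complete.
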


\begin{proof} First we prove that the operator $(I-S^m)^{-1}$ is
bounded on the set $N$ (see \eqref{NN}). Indeed, take any $x\in
N$, then we have
\begin{equation}\label{per61}
\|S^mx\|\leq\|S^m-T^mx\|+\|T^mx\|\leq \rho\|x\|.
\end{equation}
where $\rho=\|S^m-T^m\|+\d(T^m)<1$. Hence by \eqref{per61} one
gets $\|S^{mn}x\|\leq \rho^n\|x\|$ for all $n\in\bn$. Therefore,
the series $\sum_{n}S^{mn}x$ converges. Using the standard
technique, one can see that
$$
(I-S^m)^{-1}x=\sum_{n}S^{mn}x
$$
and moreover, $\|(I-S^m)^{-1}x\|\leq \frac{\|x\|}{1-\rho}$, for
all $x\in N$. This means that $(I-S^m)^{-1}$ is bounded on $N$.

It is clear that the equation $S^mz_0=z_0$ with $z_0\in\ck$
equivalent to $(I-S^m)(z_0-x_0)=-(I-S^m)x_0$. Due to
$(I-S^m)x_0\in N$ we conclude the last equation has a unique
solution
$$
z_0=x_0-(I-S^m)^{-1}((I-S^m)x_0).
$$
From the identity
$$
z_0-x_0=T^m(z_0-x_0)+(S^m-T^m)(z_0-x_0)+(S^m-T^m)x_0
$$
and keeping in mind $z_0-x_0\in N$ one finds
$$
\|z_0-x_0\|\leq
\big(\d(T^m)+\|S^m-T^m\|\big)\|z_0-x_0\|+\|S^m-T^m\|
$$
which implies \eqref{per62}.

From $S^m(Sz_0)=S(S^mz_0)=Sz_0$, and the uniqueness of $z_0$ for
$S^m$ we infer that $Sz_0=z_0$. Now assume that $S$ has another
fixed point $\tilde z_0\in\ck$. Then $S^m\tilde z_0=\tilde z_0$
which yields $\tilde z_0=z_0$. Moreover, due to \eqref{per61} one
concludes that $\d(S^m)<1$, which by Theorem \ref{Dob1} yields
that $S$ is uniformly asymptotically stable. This completes the
proof.
\end{proof}

\begin{remark} The obtained results can be applied in several
directions.
\begin{itemize}
\item[(i)] We note that all obtained results extend main results
of \cite{Kar,Mit,RKW} to general Banach spaces. Hence, they allow
to apply the obtained estimates to Markov chains over various
spaces.

\item[(ii)] Considering the classical $L_p$-spaces, one may get
the perturbation bounds for uniformly asymptotically stable Markov
chains defined on these $L_p$-spaces.   On the other hand, one may
directly apply the results to Markov chains defined on more
complicated functional spaces. Moreover, by varying the Banach
space one can obtain several interesting results in the theory of
measure-valued Markov processes.

\item[(iii)] All obtained results are even new, if one takes $X$
as pre-duals of either von Neumann algebra or $JBW$-algebra.
Moreover, if we take $X$ as a dual of $C^*$-algebras, then one
gets interesting perturbation bounds for strong mixing
$C^*$-dynamical systems. If we consider non-commutative
$L_p$-spaces, then the perturbation bounds open new perspectives
in the quantum information theory (see \cite{RKW}).

\end{itemize}

\end{remark}

\section*{Acknowledgments} The first author (N.E.) thanks Hacettepe
University Scientific Research Projects Coordination Unit support
this project under the Project Number: 014 D12 601 005 -832. She
is also grateful to International Islamic University Malaysia for
kind hospitality of her research stay and this work is started
there. The second named author (F.M.) thanks also Hacettepe
University (Turkey) for kind hospitality during 5-9 September
2015, where a part of this work is carried out.
\bibliographystyle{amsplain}

\end{document}